\numberwithin{equation}{section}
\newtheorem{thm}{Theorem}[section]
\newtheorem{lma}[thm]{Lemma}
\newtheorem{cor}[thm]{Corollary}
\renewcommand{\epsilon}{\varepsilon}
\newcommand{\e}{\varepsilon}
\renewcommand{\le}{\leqslant}
\renewcommand{\geq}{\geqslant}
\renewcommand{\leq}{\leqslant}
\newcommand{\ubd}{\overline{\dim}_{\textup{B}}}
\newcommand{\lbd}{\underline{\dim}_{\textup{B}}}
\newcommand{\ad}{\dim_{\mathrm{A}} }
\newcommand{\qad}{\dim_{\mathrm{qA}} }
\newcommand{\as}{\dim^\theta_{\mathrm{A}} }
\newcommand{\hd}{\dim_{\mathrm{H}}  }
\newcommand{\pd}{\dim_{\mathrm{P}}}
\newcommand{\rn}{\mathbb{R}^n}
\newcommand{\be}{\begin{equation}}
\newcommand{\ee}{\end{equation}}
\title{ \vspace{-20mm} Assouad dimension influences the box and packing dimensions of orthogonal projections}
\author{Kenneth J. Falconer, Jonathan M. Fraser and Pablo Shmerkin}
\begin{document}

%\date{}

\maketitle

\begin{abstract}
\noindent We present several applications of the Assouad dimension, and the related quasi-Assouad dimension and Assouad spectrum, to the box and packing dimensions of orthogonal projections of sets.  For example, we show that if the (quasi-)Assouad dimension of  $F \subseteq \rn$ is no greater than $m$, then the box and packing dimensions of $F$ are preserved under orthogonal projections onto almost all $m$-dimensional subspaces. We also show that the threshold $m$ for the (quasi-)Assouad dimension is sharp, and  bound the dimension of the exceptional set  of projections strictly away from the dimension of the Grassmannian.
\\ \\
\emph{Mathematics Subject Classification} 2020:  28A80
\\
\emph{Key words and phrases}: Assouad dimension, box dimension, packing dimension, orthogonal projection.
\end{abstract}

\section{Introduction and results}
The relationship between the dimension of a Borel set $F \subseteq \rn$ and its projections onto $m$-dimensional subspaces goes back to Marstrand \cite{mar} and Mattila \cite{mat}  who showed that
\[
\hd \pi_V F = \min\{m, \hd  F\}
\]
for almost all  $V\in G(n,m)$ with respect to the natural invariant measure on the Grassmannian $G(n,m)$, where $\pi_V: \rn\to V$ is orthogonal projection onto $V$ and $\hd$ is Hausdorff dimension.

Finding the box and packing dimensions of projections of sets is more awkward. For a non-empty bounded  $F\subseteq \rn$,  let $N_r(F)$ be the minimum number of sets of diameter $r$ that can cover $F$. The {\em lower} and {\em upper box-counting dimensions} or {\em box dimensions} of $F$ are defined by
\[
\lbd F\ =\ \varliminf_{r\to 0} \frac{\log  N_r(F)}{-\log r}
\quad \mbox{ and }\quad  \ubd F\ = \ \varlimsup_{r\to 0} \frac{\log  N_r(F)}{-\log r}.
\]
(Note that this gives the same values for the dimensions as taking  $N_r(F)$ to be the least number of sets of diameter at most $r$ that can cover $F$ or other equivalent definitions, see \cite{falconer}). The {\em packing dimension} of a (not-necessarily bounded) set may be defined in terms of upper box dimension as
\be\label{packdim}
\pd F\ =\ \inf\big\{\sup_j  \ubd F_j : F\subseteq \cup_{j=1}^\infty F_j \text{ with }F_j \text{ compact}\big\},
\ee
see \cite{falconer}.
Although the values of $\lbd \pi_V F,  \ubd \pi_V F$ and $\pd \pi_V F$ are constant for almost all $V\in G(n,m)$,  this constant can take any value in the range
 \begin{equation} \label{boxdimsbounds}
\frac{\ubd F}{1+(1/m-1/n)\ubd F} \  \leq \  \ubd \pi_V F \leq \min \{m, \ubd F\},
\end{equation}
with analogous inequalities for lower box and packing dimensions.
These inequalities were established in \cite{falconerhowroyd, falconerhowroyd2, profiles} using dimension profiles, with examples showing them to be best possible in \cite{falconerhowroyd, jarvenpaa}, and recently a simpler approach using capacities was introduced \cite{falconerprofile}. For background on the dimensions of projections, see \cite{survey1,survey2}, and for general dimension theory, see \cite{falconer}.

In light of the fact that, in general, box and packing dimensions may drop below the upper bound in \eqref{boxdimsbounds} under almost all projections, it may be of interest to find geometric conditions that ensure that such a drop does not occur. In this paper we obtain a result of this type: we show that if the Assouad, or even the quasi-Assouad, dimension of $F \subseteq \rn$ is no greater than $m$, then the box and packing dimensions of $F$ are preserved under orthogonal projection onto almost all $m$-dimensional subspaces, i.e. there is equality on the right-hand side of \eqref{boxdimsbounds}. We also obtain estimates when  $\dim_\text{A} F >m$ as well as bounds on the dimension of the exceptional set of subspaces $V$.

 The \emph{Assouad dimension} of a non-empty  set $F \subseteq \rn$ is defined by
\begin{align*}
\dim_\text{A} F \ = \  \inf \bigg\{ \  \alpha &: \text{     there exists a constant $C >0$ such that,} \\
&  \text{for all $0<r<R $ and $x \in F$, }   \text{$ N_r\big( B(x,R) \cap F \big) \ \leq \ C \bigg(\frac{R}{r}\bigg)^\alpha$ } \bigg\}.
\end{align*}
The related   \emph{upper Assouad spectrum} is defined by
\begin{align*}
\overline{\dim}_\textup{A}^\theta F  \ = \    \inf \bigg\{ \  \alpha &: \text{   there exists  a constant  $C >0$ such that,} \\
 &  \text{for all $0<r \leq R^{1/\theta} < R<1$ and $x \in F$, }
  \text{$ N_{r} \big( B(x,R) \cap F \big) \ \leq \ C \bigg(\frac{R}{r}\bigg)^\alpha$ } \bigg\}
\end{align*}
where $\theta \in (0,1)$. If one replaces $r \leq R^{1/\theta}$ with $r = R^{1/\theta}$, then one obtains the Assouad spectrum $\as F$, see \cite{Spectraa}, but it was proved in \cite{canadian} that
\[
\overline{\dim}_\textup{A}^\theta F = \sup_{\theta' \in (0,\theta)} \dim_\textup{A}^{\theta'}  F
\]
and so we are able to rely on the theory of $\as F$, which is somewhat more developed.  The upper Assouad spectrum is clearly non-decreasing in $\theta$ but the Assouad spectrum need not be.  However, in most commonly studied situations it is non-decreasing and therefore the two spectra coincide.  Finally, the \emph{quasi-Assouad dimension} is defined by
\[
\qad F = \lim_{\theta \nearrow 1} \overline{\dim}_\textup{A}^\theta F.
\]
Generally,  for $\theta \in (0,1)$,
\[
\pd \! F  \leq \ubd F \leq \overline{\dim}_\textup{A}^\theta F  \leq \qad F \leq \ad F.
\]

With these definitions we may state our two basic theorems which will be proved in the next section using dimension profiles.

\begin{thm}\label{boxapp0}
Let $1\leq m < n$ and  $\theta \in (0,1)$. If  $F\subseteq \rn$ is bounded then, for almost all $V \in G(n,m)$,
%\be\label{lbtheta}
%\lbd \pi_V F \geq \lbd  F - \max \{0, \ \overline{\dim}_\textup{A}^\theta F-m, \ (\ad F-m)(1-\theta)\}
%\ee
%and
\be\label{ubtheta}
\ubd \pi_V F \geq \ubd  F - \max \{0, \ \overline{\dim}_\textup{A}^\theta F-m, \ (\ad F-m)(1-\theta)\},
\ee
and the same conclusion holds with $\ubd$ replaced by $\lbd$. If $F$ is Borel the conclusion holds with $\ubd$ replaced by $\pd$.
\end{thm}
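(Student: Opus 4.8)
The plan is to work with the box and packing dimension \emph{profiles} of \cite{profiles}, in the capacity formulation of \cite{falconerprofile}. Recall that for $0<s\le n$ one sets $\phi^s_r(x)=\min\{1,(r/|x|)^s\}$ and defines a capacity $C^s_r(F)$ by $1/C^s_r(F)=\inf_\mu \iint \phi^s_r(x-y)\,d\mu(x)\,d\mu(y)$, the infimum being over Borel probability measures on $F$; the profiles are $\overline{\dim}^{s}_{\mathrm{B}}F=\varlimsup_{r\to0}\log C^s_r(F)/(-\log r)$ and the corresponding $\varliminf$. I shall use three facts from those references: the full profile recovers box dimension, $\overline{\dim}^{n}_{\mathrm{B}}F=\ubd F$ and $\underline{\dim}^{n}_{\mathrm{B}}F=\lbd F$ (since $C^n_r(F)\asymp N_r(F)$ up to subexponential factors); the profile identities $\ubd \pi_V F=\overline{\dim}^{m}_{\mathrm{B}}F$ and $\lbd \pi_V F=\underline{\dim}^{m}_{\mathrm{B}}F$ for almost every $V\in G(n,m)$; and the analogue $\pd \pi_V F=\dim^{m}_{\mathrm{P}}F$ a.e. for a packing profile. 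It therefore suffices to prove the single scale-by-scale inequality $C^m_r(F)\ge c\,N_r(F)\,r^{\gamma}$ for all small $r$, with $\gamma:=\max\{0,\ \overline{\dim}_\textup{A}^\theta F-m,\ (\ad F-m)(1-\theta)\}$.

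For the core estimate I may rescale so that $\diam F$ is as small as I like (this changes none of the dimensions involved). Fix $\e>0$, write $d=\ad F+\e$ and $d_\theta=\overline{\dim}_\textup{A}^\theta F+\e$, and take a maximal $r$-separated set $\{x_i\}_{i=1}^N\subseteq F$, so $N\asymp N_r(F)$. With $\mu=N^{-1}\sum_i\delta_{x_i}$ the double integral equals $N^{-2}\sum_i P_i$ where $P_i=\sum_j\phi^m_r(x_i-x_j)$, and I would bound each $P_i$ by splitting the $x_j$ into dyadic shells $2^kr\le|x_i-x_j|<2^{k+1}r$, $0\le k\le K\asymp\log_2(1/r)$. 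The number of $x_j$ in $B(x_i,2^{k+1}r)$ is at most $C(2^{k+1})^{d_\theta}$ when $2^{k+1}r\ge r^\theta$ — so that the upper Assouad spectrum applies, its defining constraint $r\le R^{1/\theta}$ being exactly $R\ge r^\theta$ — and at most $C(2^{k+1})^{d}$ on the remaining shells, while each such point contributes at most $2^{-km}$. Summing the two resulting geometric series, one over the near shells $2^{k+1}<r^{-(1-\theta)}$ with ratio $2^{d-m}$ and one over the far shells with ratio $2^{d_\theta-m}$, gives $P_i\le Cr^{-\gamma_\e}$ with $\gamma_\e=\max\{0,(d-m)(1-\theta),d_\theta-m\}$: the three terms come respectively from a convergent tail, the top of the near sum (where $2^{k+1}\asymp r^{-(1-\theta)}$) and the top of the far sum (where $2^{k+1}\asymp 1/r$). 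Hence $\iint\phi^m_r\,d\mu\,d\mu\le Cr^{-\gamma_\e}/N$ and $C^m_r(F)\ge N r^{\gamma_\e}/C$.

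Dividing by $-\log r$, absorbing the constant, and letting $r\to0$ gives $\overline{\dim}^{m}_{\mathrm{B}}F\ge\ubd F-\gamma_\e$ and $\underline{\dim}^{m}_{\mathrm{B}}F\ge\lbd F-\gamma_\e$; letting $\e\to0$ replaces $\gamma_\e$ by $\gamma$, and the profile identities turn these into the asserted inequalities for $\ubd\pi_V F$ and $\lbd\pi_V F$. For the Borel case I would use \eqref{packdim}: given any cover $F\subseteq\bigcup_j F_j$ by compact sets, the box-profile inequality applied to each $F_j$ yields $\overline{\dim}^{m}_{\mathrm{B}}F_j\ge\ubd F_j-\gamma(F_j)$, and since the Assouad dimension and the upper Assouad spectrum are monotone under inclusion, $\gamma(F_j)\le\gamma(F)=\gamma$; taking the supremum over $j$ and then the infimum over covers gives $\dim^{m}_{\mathrm{P}}F\ge\pd F-\gamma$, and $\pd\pi_V F=\dim^{m}_{\mathrm{P}}F$ a.e. finishes the argument.

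The main obstacle is the two-regime potential estimate: one must locate the transition scale $R\asymp r^\theta$ and verify that it coincides exactly with the boundary $r=R^{1/\theta}$ of the region in which the upper Assouad spectrum is available, and then check that the two geometric sums contribute precisely the terms $(\ad F-m)(1-\theta)$ and $\overline{\dim}_\textup{A}^\theta F-m$ of the maximum. Everything else — the passage from capacities to projected box and packing dimensions, the rescaling of $F$, and the comparison of $N$ with $N_r(F)$ — is standard profile bookkeeping.
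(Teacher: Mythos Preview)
Your proof is correct and, for the box-dimension cases, follows exactly the paper's route: reduce via the profile identities (the paper's Theorem~\ref{mainA}) to the inequality $\ubd^m F \ge \ubd F - \gamma$, and prove the latter by placing the uniform measure on a maximal $r$-separated subset of $F$ and bounding its $\phi^m_r$-energy through a dyadic-shell decomposition split at the transition scale $R\asymp r^\theta$, using the Assouad bound on the near shells and the upper Assouad spectrum on the far shells --- this is precisely the paper's Theorem~\ref{boxapp}.

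The one genuine difference is your treatment of the packing case. The paper invokes a Baire-category lemma to extract a compact $E\subseteq F$ all of whose relatively open pieces have $\ubd>\gamma$, and then applies the box-profile inequality to such a piece inside any compact cover. You instead argue directly from the definitions \eqref{packdim} and \eqref{packprof}: apply the box-profile inequality to each $F_j$, use monotonicity of $\ad$ and $\overline{\dim}_\textup{A}^\theta$ to replace $\gamma(F_j)$ by $\gamma(F)$, and pass to $\sup_j$ and then $\inf$ over covers. Your route is shorter and avoids Baire entirely; just note one small gap to close: the compact sets $F_j$ covering $F$ need not lie inside $F$, so $\gamma(F_j)\le\gamma(F)$ is not automatic. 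The fix is routine --- replace each $F_j$ by $F_j\cap\overline{F}$, which is still compact, still covers $F$, can only decrease both $\ubd F_j$ and $\ubd^m F_j$, and sits inside $\overline{F}$, whose Assouad-type dimensions coincide with those of $F$.
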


The following statement bounds the Hausdorff dimension of the exceptional set of projections in Theorem \ref{boxapp0} strictly away from $m(n-m)$, the dimension of $G(n,m)$.

\begin{thm}\label{boxapp5}
Let $1\leq m < n$, $s\in (0,m)$ and $\theta \in (0,1)$. If $F\subseteq \rn$ is bounded then
\begin{align}
\hd \Big\{ V \in G(n,m) : \ubd \pi_V F < \ubd F - \max \{0 & , \ \overline{\dim}_\textup{A}^\theta F-s, \ (\ad F-s)(1-\theta)\}\Big\} \nonumber\\
&\leq m(n-m)-(m-s),\label{exbound}
\end{align}
and the same conclusion holds with $\ubd$ replaced by $\lbd$. If $F$ is Borel  the conclusion holds with $\ubd$ replaced by $\pd$.
\end{thm}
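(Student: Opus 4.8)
The plan is to deduce Theorem \ref{boxapp5} from the capacity/profile machinery behind Theorem \ref{boxapp0}, replacing its ``almost every $V$'' conclusion by an exceptional-set bound obtained by integrating against a Frostman measure on $G(n,m)$. Write $\phi^s_r(x)=\min\{1,(r/|x|)^s\}$ for the $s$-dimensional kernel, let $C^s_r(E)^{-1}=\inf_\mu\iint\phi^s_r(x-y)\,d\mu(x)\,d\mu(y)$ (infimum over probability measures on $E$) be the associated capacity, and let $\overline{\dim}^{\,s}_{\mathrm B}F=\varlimsup_{r\to0}\log C^s_r(F)/(-\log r)$ be the upper box dimension profile. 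I will use two facts from the proof of Theorem \ref{boxapp0}, both valid for every profile parameter: (i) on the target plane $V\cong\R^m$ the capacity is dominated by box counting, $C^m_r(E)\lesssim N_r(E)$; and (ii) the profile controls box dimension up to the Assouad correction, $\overline{\dim}^{\,s}_{\mathrm B}F\ge\ubd F-\max\{0,\ \overline{\dim}_{\textup A}^{\theta}F-s,\ (\ad F-s)(1-\theta)\}$. By (ii) it is enough to show that $\ubd\pi_VF\ge\overline{\dim}^{\,s}_{\mathrm B}F$ for all $V$ outside a set $E$ with $\hd E\le m(n-m)-(m-s)$.

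Suppose instead that $E:=\{V:\ubd\pi_VF<\overline{\dim}^{\,s}_{\mathrm B}F\}$ satisfied $\hd E>m(n-m)-(m-s)$. By Frostman's lemma fix a probability measure $\nu$ on $E$ and an exponent $t\in(m(n-m)-(m-s),\hd E)$ with $\nu(B(V,\rho))\lesssim\rho^{t}$. The crucial input is the transversality estimate
\be
\nu\{V:|\pi_Vz|\le\tau|z|\}\ \lesssim\ \tau^{s'},\qquad s':=t-\big(m(n-m)-m\big),
\ee
for every $z\ne0$ and $0<\tau<1$. Indeed $\{V:\pi_Vz=0\}=\{V\subseteq z^\perp\}$ is a copy of $G(n-1,m)$, a submanifold of $G(n,m)$ of codimension $m$; its $\tau$-neighbourhood, which contains $\{V:|\pi_Vz|\le\tau|z|\}$, is covered by $\lesssim\tau^{-(m(n-m)-m)}$ balls of radius $\tau$, each of $\nu$-mass $\lesssim\tau^{t}$. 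Because $s<m$ one may take $t$ close enough to the threshold that $s'\in(s,m)$; this codimension-$m$ bookkeeping, which converts the Frostman exponent $t$ into the profile parameter $s'$ and produces the loss $m-s$, is the heart of the argument.

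Now I run the capacity argument against $\nu$ in place of the invariant measure on $G(n,m)$. A short dyadic integration in the law of $|\pi_Vz|$, using the displayed estimate and $s'<m$, gives $\int\phi^m_r(\pi_Vz)\,d\nu(V)\lesssim\phi^{s'}_r(z)$. Choosing a near-optimal measure $\mu$ for $C^{s'}_r(F)$, pushing it forward by $\pi_V$, integrating energies in $V$ via Fubini, and using (i) together with the infimum defining $C^m_r(\pi_VF)$, one obtains for every $r$
\be
\int N_r(\pi_VF)^{-1}\,d\nu(V)\ \lesssim\ \iint\phi^{s'}_r(x-y)\,d\mu(x)\,d\mu(y)\ \asymp\ C^{s'}_r(F)^{-1}.
\ee
Fix $\beta'<\overline{\dim}^{\,s'}_{\mathrm B}F$ and choose $\delta>0$ together with a geometrically decaying sequence $r_k\to0$ with $C^{s'}_{r_k}(F)^{-1}\le r_k^{\beta'+\delta}$. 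Chebyshev's inequality then gives $\nu\{V:N_{r_k}(\pi_VF)<r_k^{-\beta'}\}\lesssim r_k^{\delta}$, which is summable, so Borel--Cantelli yields $\ubd\pi_VF\ge\beta'$ for $\nu$-almost every $V$. Letting $\beta'\nearrow\overline{\dim}^{\,s'}_{\mathrm B}F\ge\overline{\dim}^{\,s}_{\mathrm B}F$ contradicts the definition of $E$, so $\hd E\le m(n-m)-(m-s)$, and combining with (ii) proves \eqref{exbound}.

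For the lower box and packing dimensions one argues identically, replacing $\overline{\dim}^{\,s}_{\mathrm B}$ and (ii) by the corresponding lower box and packing dimension profiles together with their analogues from the proof of Theorem \ref{boxapp0}. Since the capacity bound of the previous paragraph holds at every scale $r$, for the lower box dimension the Borel--Cantelli step runs along the full geometric sequence $r_k=2^{-k}$ and yields a $\varliminf$ lower bound, while the packing case follows from the packing profile analogue of (i)--(ii) by the same weighted capacity computation. Apart from the transversality estimate of the second paragraph, every step is a Frostman-weighted rerun of the almost-everywhere profile argument underlying Theorem \ref{boxapp0}.
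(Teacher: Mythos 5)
Your overall route coincides with the paper's at the structural level: reduce to dimension profiles via the inequality $\ubd^s F \geq \ubd F - \max \{0, \ \overline{\dim}_\textup{A}^\theta F-s, \ (\ad F-s)(1-\theta)\}$ (your fact (ii), which is the paper's Theorem \ref{boxapp}), and then invoke the exceptional-set estimate $\hd \{ V : \ubd \pi_V F < \ubd^s F\} \leq m(n-m)-(m-s)$. The difference is that the paper simply cites this last estimate as Theorem \ref{mainA}(ii) (from \cite{falconerprofile}), so its proof of Theorem \ref{boxapp5} is a one-line combination, whereas you re-derive it: Frostman measure $\nu$ on the putative exceptional set, the transversality bound $\nu\{V:|\pi_V z|\le \tau|z|\}\lesssim \tau^{s'}$ with $s'=t-(m(n-m)-m)$, the kernel integration $\int \phi_r^m(\pi_V z)\,d\nu(V)\lesssim \phi_r^{s'}(z)$, and Chebyshev plus Borel--Cantelli. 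For the upper and lower box dimension statements this reconstruction is correct (it is essentially the argument of \cite{falconerprofile} itself), modulo two points you should make explicit: your fact (i), $C_r^m(E)\lesssim N_r(E)$, is \emph{not} in the paper's proof of Theorem \ref{boxapp0} (that proof gives the reverse inequality $C_r^s(F)\gtrsim N_r(F)\min\{\ldots\}$), though it is true by an easy bounded-overlap energy estimate; and Frostman's lemma requires the exceptional set to be Borel or analytic, which holds after replacing $F$ by its closure (harmless for box dimensions) since $V\mapsto N_r(\pi_V F)$ is then lower semicontinuous.

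The genuine gap is the packing dimension case. You dismiss it with ``the packing case follows from the packing profile analogue of (i)--(ii) by the same weighted capacity computation,'' but that computation only yields, for each \emph{fixed} compact set, lower bounds on $\ubd$ of its projection for $\nu$-almost every $V$. Packing dimension of $\pi_V F$ is an infimum over countable compact covers of $\pi_V F$, and the near-optimal cover depends on $V$, so no single application of Chebyshev/Borel--Cantelli controls it; the inequality $\hd\{V : \pd \pi_V F < \pd^s F\}\le m(n-m)-(m-s)$ is not a formal consequence of the box-dimension version. The standard repair---used in the paper's own proof of the packing part of Theorem \ref{boxapp} and in \cite{falconerprofile}---is a localization-plus-Baire argument: take compact $E\subseteq F$ with $\pd(E\cap U)>\gamma$ for every open $U$ meeting $E$ (\cite[Lemma 2.8.1]{bisper}), apply the box-dimension exceptional estimate to each $E\cap \overline{U_i}$ over a countable basis $\{U_i\}$, take the countable union of the exceptional sets (still of Hausdorff dimension at most $m(n-m)-(m-s)$ by countable stability), and for $V$ outside this union note that any countable compact cover $\{G_j\}$ of $\pi_V E$ pulls back to a relatively closed cover of $E$, so by Baire some $G_j$ contains $\pi_V(E\cap\overline{U_i})$ for some $i$, forcing $\ubd G_j \geq \gamma - \max\{0, \ \overline{\dim}_\textup{A}^\theta F-s, \ (\ad F-s)(1-\theta)\}$. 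Without this step (or an equivalent), your proposal does not establish the $\pd$ statement of the theorem.
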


The following corollaries follow easily from the theorems by choosing appropriate parameters.

\begin{cor}\label{boxapp2}
Suppose that $\qad F \leq \max\{m,\ubd F\}$. If $F\subseteq \rn$ is bounded then,
for   almost all $V\in G(n,m)$,
\be\label{equa}
\ubd \pi_V F =\min\{m,\ubd F\},
\ee
and more generally
\be\label{drop}
\ubd \pi_V F \geq \ubd  F - \max \{0, \ \qad F-m\}.
\ee
The same conclusion holds with $\ubd$ replaced by $\lbd$. If $F$ is Borel the conclusion holds with $\ubd$ replaced by $\pd$.
\end{cor}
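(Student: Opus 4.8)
The plan is to deduce the corollary from Theorem~\ref{boxapp0} by letting $\theta \nearrow 1$, and then to read off \eqref{equa} from \eqref{drop} by a short case analysis. I would treat the upper box dimension in detail; the lower box and packing versions are identical once $\ubd$ is replaced by $\lbd$ (respectively $\pd$) throughout, including in the hypothesis, which then reads $\qad F \leq \max\{m,\lbd F\}$ (respectively $\qad F \leq \max\{m,\pd F\}$).

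First I would establish \eqref{drop}. Fix a sequence $\theta_k \nearrow 1$. For each $k$, Theorem~\ref{boxapp0} provides a set $G_k \subseteq G(n,m)$ of full measure on which
\[
\ubd \pi_V F \;\geq\; \ubd F - \max\{0,\ \overline{\dim}_\textup{A}^{\theta_k} F - m,\ (\ad F - m)(1-\theta_k)\}.
\]
The one point needing care is that the exceptional set in Theorem~\ref{boxapp0} depends on $\theta$, so I would pass to the countable intersection $G = \bigcap_k G_k$, which still has full measure. For $V \in G$ the displayed inequality holds for every $k$, and letting $k \to \infty$ I would use that $\overline{\dim}_\textup{A}^{\theta_k} F \to \qad F$ (the defining limit of the quasi-Assouad dimension, the upper spectrum being non-decreasing in $\theta$) while $(\ad F - m)(1-\theta_k) \to 0$. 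By continuity of $\max$ the right-hand side converges to $\ubd F - \max\{0,\ \qad F - m\}$, giving \eqref{drop} for all $V \in G$.

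Next I would derive \eqref{equa}, combining \eqref{drop} with the elementary bound $\ubd \pi_V F \leq \min\{m, \ubd F\}$, valid for every $V$ because $\pi_V$ is $1$-Lipschitz and $\pi_V F$ lies in the $m$-dimensional space $V$ (the right-hand inequality of \eqref{boxdimsbounds}). If $\ubd F \leq m$, the hypothesis forces $\qad F \leq m$, so $\max\{0,\qad F - m\} = 0$ and \eqref{drop} gives $\ubd \pi_V F \geq \ubd F$; with the upper bound this yields $\ubd \pi_V F = \ubd F = \min\{m,\ubd F\}$. If instead $\ubd F > m$, the hypothesis gives $\qad F \leq \ubd F$, while $\ubd F \leq \qad F$ always holds, so $\qad F = \ubd F$; then $\max\{0,\qad F - m\} = \ubd F - m$ and \eqref{drop} gives $\ubd \pi_V F \geq m$, which with the upper bound $\leq m$ yields $\ubd \pi_V F = m = \min\{m,\ubd F\}$.

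I do not expect a genuine obstacle: the corollary is essentially a bookkeeping consequence of Theorem~\ref{boxapp0}. The only steps requiring attention are the measure-theoretic one of intersecting countably many full-measure sets (so that a single $V$ witnesses the inequality for all $\theta_k$ at once) and the squeeze $\ubd F \leq \qad F \leq \max\{m,\ubd F\}$ that collapses to $\qad F = \ubd F$ in the regime $\ubd F > m$. The same squeeze with $\lbd$ or $\pd$ in place of $\ubd$ is exactly what makes the lower box and packing statements go through under their correspondingly modified hypotheses.
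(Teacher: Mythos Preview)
Your proposal is correct and follows the paper's approach of letting $\theta \nearrow 1$ in Theorem~\ref{boxapp0}; you supply the details (countable intersection of full-measure sets, case analysis for \eqref{equa}) that the paper's one-line proof leaves implicit. Your remark that the hypothesis should be read with $\lbd$ (respectively $\pd$) in place of $\ubd$ for the lower box and packing versions is the natural reading and is indeed what is needed for the \eqref{equa} part of the argument to go through.
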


\begin{proof}
The almost sure estimates \eqref{equa} and \eqref{drop} follow on letting $\theta \nearrow 1$ in \eqref{ubtheta}.  \end{proof}

The equality \eqref{equa} in Corollary \ref{boxapp2} can also be obtained using \cite[Proposition 4.5]{orp}.

\begin{cor}\label{boxapp4}
Suppose that $\qad F < m$. If $F\subseteq \rn$ is bounded  then
$$
\hd \{V \in G(n,m) : \ubd \pi_V F < \ubd  F\} \leq m(n-m)-(m-\qad F),
$$
and the same conclusion holds with $\ubd$ replaced by $\lbd$. If $F$ is Borel the conclusion holds with $\ubd$ replaced by $\pd$.
\end{cor}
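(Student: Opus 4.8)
The plan is to deduce the corollary directly from Theorem \ref{boxapp5} by sending the spectrum parameter $\theta \nearrow 1$ while letting the auxiliary exponent $s$ decrease to $\qad F$. Write $E = \{V \in G(n,m) : \ubd \pi_V F < \ubd F\}$ and, for $\delta > 0$, set $E_\delta = \{V \in G(n,m) : \ubd \pi_V F < \ubd F - \delta\}$. Since $\ubd \pi_V F < \ubd F$ holds if and only if $\ubd \pi_V F < \ubd F - \delta$ for some $\delta > 0$, I would first record the decomposition $E = \bigcup_{k=1}^\infty E_{1/k}$, so that by countable stability of Hausdorff dimension it suffices to bound $\hd E_\delta$ for each fixed $\delta > 0$.

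Fix such a $\delta > 0$. Because $\qad F < m$ and $\qad F \geq 0$, I can choose $s$ with $\qad F < s < m$ (so automatically $s \in (0,m)$), and the idea is to make the correction term
\[
M(\theta) := \max\{0, \ \overline{\dim}_\textup{A}^\theta F - s, \ (\ad F - s)(1-\theta)\}
\]
appearing in Theorem \ref{boxapp5} smaller than $\delta$. As $\theta \nearrow 1$ we have $\overline{\dim}_\textup{A}^\theta F \to \qad F < s$ (by monotonicity of the upper spectrum and the definition of the quasi-Assouad dimension), so the middle entry becomes negative, while $(\ad F - s)(1-\theta) \to 0$ no matter how large $\ad F$ is; hence $M(\theta) \to 0$. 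Choosing $\theta = \theta(s,\delta)$ close enough to $1$ gives $M(\theta) < \delta$, whence $\ubd F - \delta < \ubd F - M(\theta)$ and therefore
\[
E_\delta \subseteq \{V \in G(n,m) : \ubd \pi_V F < \ubd F - M(\theta)\}.
\]
Applying Theorem \ref{boxapp5} to this last set then yields $\hd E_\delta \leq m(n-m) - (m-s)$.

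Since this bound holds for every $s \in (\qad F, m)$ (with $\theta$ chosen accordingly), letting $s \searrow \qad F$ gives $\hd E_\delta \leq m(n-m) - (m - \qad F)$, and taking the supremum over $\delta = 1/k$ produces $\hd E \leq m(n-m) - (m - \qad F)$, as required. The identical argument applies with $\ubd$ replaced by $\lbd$, and by $\pd$ when $F$ is Borel, since Theorem \ref{boxapp5} supplies all three cases simultaneously. The only delicate point — and the reason one cannot simply substitute $\theta = 1$ into Theorem \ref{boxapp5} — is that $\ad F$ may exceed $m$, so the term $(\ad F - s)(1-\theta)$ does not vanish for any fixed $\theta < 1$; the decomposition $E = \bigcup_k E_{1/k}$ together with the double limit in $\theta$ and $s$ is precisely the device needed to absorb this term and recover the clean exceptional-set bound in terms of $\qad F$.
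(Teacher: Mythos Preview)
Your argument is correct and follows the same approach as the paper, whose entire proof reads ``Take $s = \qad F$ and let $\theta \nearrow 1$ in \eqref{exbound}''. You have simply made the implicit countable-stability step explicit and used the harmless variant of approaching $s=\qad F$ from above rather than setting it directly.
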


\begin{proof}
Take $s= \qad F$ and let $\theta \nearrow 1$ in \eqref{exbound}.
\end{proof}

In the absence of a precise result, a natural question is when \eqref{drop} improves on the general lower bounds from \eqref{boxdimsbounds}.  A careful analysis of the lower bound yields many such situations.  We provide one instance, based on a knowledge of the Assouad dimension.  We exclude the range $\ad F  < \max\{m, \ubd F \}$ since this is covered by Corollary \ref{boxapp2}.

\begin{cor}\label{boxapp3}
Let $1\leq m<n$ be integers. If $F\subseteq \rn$ is bounded with $\ubd F \leq m$ and
\[
\max\{m, \ubd F \} \leq \ad F < \frac{(mn + 2\ubd F (n-m))m}{mn + \ubd F (n-m)},
\]
then, for   almost all $V\in G(n,m)$,
\[
\ubd \pi_V F \geq  \ubd  F -  \ \frac{(\ad F-m)\ubd F}{m} > \frac{\ubd F}{1+(1/m-1/n)\ubd F}.
\]
This result remains valid with $\ubd$ replaced by $\lbd$ throughout.   If $F$ is Borel the result remains valid with $\ubd$ replaced by $\pd$.
\end{cor}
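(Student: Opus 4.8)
The plan is to derive this as a special case of Theorem \ref{boxapp0} by committing to one well-chosen value of $\theta$ and then checking the final strict inequality by elementary algebra. Throughout write $A = \ad F$ and $B = \ubd F$, so the hypotheses read $m \le A < \frac{m(mn + 2B(n-m))}{mn + B(n-m)}$ together with $B \le m$; note that the lower bound $A \ge m$ already forces $B>0$, since the upper bound collapses to $m$ when $B=0$ and the range would be empty. The single ingredient beyond Theorem \ref{boxapp0} is the elementary estimate $\overline{\dim}_\textup{A}^\theta F \le \ubd F/(1-\theta)$: for $r = R^{1/\theta'}$ one has $N_r(B(x,R)\cap F) \le N_r(F) \le r^{-(B+\e)}$ for all small $r$, and rearranging (recall $R<1$) gives $\dim_\textup{A}^{\theta'} F \le (B+\e)/(1-\theta')$; letting $\e \to 0$ and then taking the supremum over $\theta' \in (0,\theta)$ yields the claimed bound on the upper Assouad spectrum.

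Next I would choose $\theta = 1 - B/m$, which lies in $(0,1)$ whenever $0 < B < m$. With this choice the spectrum term in \eqref{ubtheta} is controlled by $\overline{\dim}_\textup{A}^\theta F - m \le B/(1-\theta) - m = m - m = 0$, so it contributes nothing to the maximum, while the remaining term is $(A-m)(1-\theta) = (A-m)B/m \ge 0$ (using $A \ge m$). Thus Theorem \ref{boxapp0} gives, for almost all $V$, the inequality $\ubd \pi_V F \ge B - (A-m)B/m$, which is the first assertion. The endpoint $B = m$ (where $\theta = 0$ is inadmissible) is handled by applying the theorem along a sequence $\theta_k \to 0^+$ and discarding the resulting countable union of exceptional null sets; the correction terms then converge to $A - m = (A-m)B/m$, giving the same bound.

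It then remains to verify $B - (A-m)B/m > B/(1 + (1/m-1/n)B)$. Dividing by $B>0$, rewriting the left side as $(2m-A)/m$ and the right side as $mn/(mn + (n-m)B)$, and clearing denominators (both sides are positive, since the hypothesised upper bound on $A$ is at most $2m$, so $2m - A > 0$), the inequality becomes $(2m-A)\bigl(mn + (n-m)B\bigr) > m^2 n$. Solving the corresponding equality $(2m-A)\bigl(mn + (n-m)B\bigr) = m^2 n$ for $A$ returns exactly $A = \frac{m(mn + 2B(n-m))}{mn + B(n-m)}$, so the strict inequality holds precisely under the stated upper bound on $\ad F$. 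The passage to $\lbd$ and $\pd$ is automatic, since Theorem \ref{boxapp0} is uniform across the three dimensions and the auxiliary estimate uses only $\ubd F$.

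I do not expect a real obstacle: the entire content lies in spotting that $\theta = 1 - \ubd F/m$ is the value for which the Assouad-spectrum contribution to the maximum in \eqref{ubtheta} exactly vanishes (under the estimate above), isolating the single term $(\ad F - m)(1-\theta)$ and producing the clean bound. This $\theta$ need not be the value that optimises \eqref{ubtheta} — balancing the two non-trivial terms would in general give something stronger — but it delivers precisely the inequality stated in the corollary, after which everything reduces to the algebraic identity matching the threshold to the given bound on $\ad F$. The only delicate point is the degenerate endpoint $\ubd F = m$, handled by the limiting argument described above.
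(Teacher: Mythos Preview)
Your proposal is correct and follows essentially the same approach as the paper: choose $\theta = 1 - \ubd F/m$ so that the spectrum bound $\overline{\dim}_\textup{A}^\theta F \le \ubd F/(1-\theta)$ (which the paper cites from \cite{Spectraa} rather than deriving) kills the middle term in \eqref{ubtheta}, then handle the endpoint $\ubd F = m$ by letting $\theta\searrow 0$, and finally check the strict inequality against the stated threshold on $\ad F$. Your explicit algebraic verification of that last step is more detailed than the paper's one-line remark, but otherwise the arguments coincide.
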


\begin{proof}
This follows  from Theorem \ref{boxapp0}.  If $\ubd F = m$, then the bound follows by letting $\theta \searrow 0$ in \eqref{ubtheta}.  If $\ubd F < m$, then choose $\theta$ as large as possible such that $\overline{\dim}_\textup{A}^\theta F \leq m$.  By \cite[Proposition 3.1]{Spectraa} we know that
\[
\overline{\dim}_\textup{A}^\theta F \leq \frac{\ubd F}{1-\theta}
\]
and therefore we can always choose
\[
\theta = 1-\frac{\ubd F}{m}.
\]
Therefore by \eqref{ubtheta},   for   almost all $V\in G(n,m)$,
\begin{eqnarray*}
\ubd \pi_V F \geq \ubd  F - (\ad F-m)(1-\theta) &= &  \ubd  F -  \ \frac{(\ad F-m)\ubd F}{m} \\ \\
 &>& \frac{\ubd F}{1+(1/m-1/n)\ubd F},
\end{eqnarray*}
as required. The final strict inequality uses the assumption on the Assouad dimension.
\end{proof}

Figure 1 indicates our bounds for the almost sure box dimensions of projections depending on the pair $(\ubd F ,  \ad F)$ in different cases.

\begin{figure}[H]
	\centering
	\includegraphics[width=0.45\textwidth]{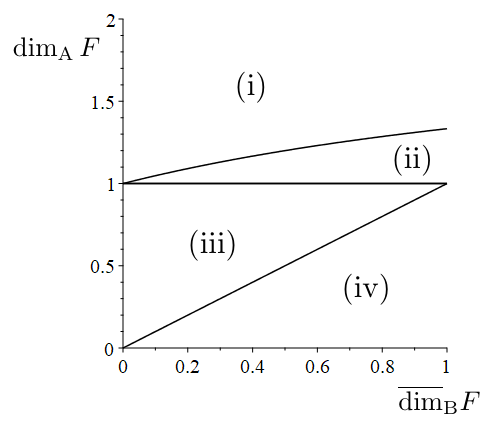}
\caption{Taking $m=1$ and $n=2$,  if the pair $(\ubd F ,  \ad F)$ lies in  region: (i) we get no information, (ii) Corollary \ref{boxapp3}  gives improvements on the general bounds \eqref{boxdimsbounds}, (iii)  Corollary \ref{boxapp2} gives $\min\{m,\ubd F\}$, (iv) is not possible, since $\ad F \geq \ubd F$.  The curve bounding regions (i) and (ii) is given by $y =  (2x+2)/(x+2)$. }
\end{figure}

The proof of Corollary \ref{boxapp3} involved choosing a particular $\theta$ in the `worst case scenario'.  If $\as F$ is known, then there may be a much better choice leading to better estimates in the  particular setting.  For example, better choices of $\theta$ always  exist if $F$ is a Bedford-McMullen carpet, see \cite{Spectrab}.

We remark that the examples in \cite[Lemma 19]{falconerhowroyd2} of sets $F\subseteq \mathbb{R}^n$ for which there is almost sure equality on the left-hand inequality of \eqref{boxdimsbounds}, all have Assouad dimension $\ad F = n$, consistent with our estimates.

\section{Dimension profiles and proofs of theorems}

We first review the relationship between the dimension profiles of  a set and the box dimensions of its projections. Then estimating the dimension profiles in terms of  (quasi-)Assouad dimensions will lead to the theorems in Section 1.

Dimension profiles may be defined in terms of capacities with respect to certain kernels \cite{falconerprofile}. For $s \in [0,n]$ and $r>0$ we define the  kernel
\be
 \phi_r^s(x)= \min\bigg\{ 1, \bigg(\frac{r}{|x|}\bigg)^s\bigg\} \quad (x\in \rn).\label{ker}
\ee
For a  non-empty compact  $F \subseteq \rn$,  the \emph{capacity}, $C_r^s(F)$, of $F$ with respect to this kernel is  given by
\[
\frac{1}{C_r^s(F)}\  = \ \inf_{\mu \in {\mathcal M}(F)}\int\int \phi_r^s(x-y)d\mu(x)d\mu(y)
\]
where  ${\mathcal M}(F)$ denotes the collection of Borel probability measures supported by $F$.  The double integral inside the infimum is called the {\it energy} of $\mu$ with respect to the kernel. The capacity of a general bounded set is taken to be that of its closure.
For bounded $F \subseteq \rn$ and $s>0$ we define the {\it lower} and {\it upper box dimension   profiles} of $F$ by
\[
\underline{\mbox{\rm dim}}_{\rm B}^s F \ =\   \varliminf_{r\to 0} \frac{\log  C_r^s(F)}{-\log r},\quad \overline{\mbox{\rm dim}}_{\rm B}^s F \ =\   \varlimsup_{r\to 0} \frac{\log  C_r^s(F)}{-\log r},
\]
and, analogously to the packing dimension \eqref{packdim},  the  {\it packing dimension profile} (for $F$ not necessarily bounded) by
\be\label{packprof}
\pd^s F\ =\ \inf\big\{\sup_j  \ubd^s F_j : F\subseteq \cup_{j=1}^\infty F_j \text{ with }F_j \text{ compact}\big\}.
\ee
In particular, by \cite[Corollary 2.5]{falconerprofile} if $s\geq n$ then
\[
\underline{\mbox{\rm dim}}_{\rm B}^s F \ =\  \lbd F, \ \  \overline{\mbox{\rm dim}}_{\rm B}^s F \ =\  \ubd F,
\ \  \pd^s \! F \ =\  \pd\!  F
\]
but for  $s < n$ the dimension profiles give the almost sure  dimensions of projections of sets as well as information on the size of the set of exceptional projections, as follows.

\begin{thm}{\rm \cite[Theorems 1.1, 1.2]{falconerprofile}}\label{mainA}

\noindent $(i)$ Let $1\leq m <n$ be an integer.  For almost all $V \in G(n,m)$, if $F\subseteq \rn$ is bounded
\[
\lbd \pi_V F = \lbd^m F, \ \ \text{and} \ \ \ubd \pi_V F = \ubd^m F,
\]
and  if $F\subseteq \rn$ is Borel
\[
  \pd\!  \pi_V F = \pd^m\!  F.
\]
\noindent $(ii)$ For $0<s<m$, if $F\subseteq \rn$ is bounded
\[
\hd \{ V \in G(n,m) : \lbd \pi_V F < \lbd^s F\} \leq m(n-m)-(m-s),
\]
\[
\hd \{ V \in G(n,m) : \ubd \pi_V F < \ubd^s F\} \leq m(n-m)-(m-s),
\]
and  if $F\subseteq \rn$ is Borel
\[
\hd \{ V \in G(n,m) : \pd\!  \pi_V F < \pd^s \! F\} \leq m(n-m)-(m-s).
\]
\end{thm}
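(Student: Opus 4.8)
The plan is to run the potential-theoretic machinery behind the capacity definition, which reduces everything to comparing the intrinsic capacity of a projection with the extrinsic profile capacity of $F$. First I would record the elementary reduction: applied within the $m$-dimensional space $V$ (where the relevant ambient dimension is $m$), the borderline case $s=m$ of the capacity--covering comparison underlying \cite[Corollary 2.5]{falconerprofile} gives $N_r(\pi_V F)\asymp C_r^m(\pi_V F)$, with the capacity computed inside $V$; so it suffices to relate $C_r^m(\pi_V F)$ to the profile capacity $C_r^m(F)$. The engine for this is the integral-geometric estimate I would establish,
\[
\int_{G(n,m)} \phi_r^m(\pi_V x)\, d\gamma_{n,m}(V) \ \asymp \ \phi_r^m(x) \qquad (x \in \rn,\ r>0),
\]
with $\gamma_{n,m}$ the invariant measure and constants depending only on $n,m$; this encodes the transversality fact that $\gamma_{n,m}\{V : |\pi_V x| \le t\}\asymp (t/|x|)^m$ for $0<t\le|x|$, and is exactly what forces the exponent $m$ in the kernel.

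For part (i), the inequality $\ubd \pi_V F \le \ubd^m F$ (and the analogue for $\lbd$) holds for \emph{every} $V$: given a near-optimal $\nu\in\mathcal M(\pi_V F)$ for $C_r^m(\pi_V F)$ and a measurable section $\sigma$ of $\pi_V$ over $\pi_V F$, the lift $\sigma_*\nu\in\mathcal M(F)$ has profile energy no larger than the energy of $\nu$ (since $|\sigma(u)-\sigma(u')|\ge|u-u'|$ only decreases $\phi_r^m$), whence $C_r^m(\pi_V F)\le C_r^m(F)$ deterministically. The reverse, almost-sure, inequality is the substantial one: fixing $r$ and a near-optimal $\mu\in\mathcal M(F)$ for $C_r^m(F)$, the candidate $\pi_V\mu$ gives $1/C_r^m(\pi_V F)\le\iint\phi_r^m(\pi_V(x-y))\,d\mu(x)\,d\mu(y)$, and integrating over $G(n,m)$ with Fubini and the displayed estimate yields $\int C_r^m(F)/C_r^m(\pi_V F)\,d\gamma_{n,m}\lesssim 1$, so $C_r^m(\pi_V F)\gtrsim C_r^m(F)$ off a set of controlled measure. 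The delicate point is upgrading these single-scale estimates to an almost-sure statement about the limsup (for $\ubd$) and liminf (for $\lbd$) defining box dimension; I would do this by working along a sequence $r_k\to 0$ realizing the relevant profile limit and exploiting the slow variation of $\log C_r^m(F)$ in $r$. The packing statement then follows by countable stability from \eqref{packdim} and \eqref{packprof}, decomposing $F$ into compact pieces and intersecting the resulting countably many full-measure sets of directions.

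For part (ii) I would replace ``almost every $V$'' by a Frostman argument. Supposing for contradiction that the exceptional set $E\subseteq G(n,m)$ has $\hd E> m(n-m)-(m-s)$, place on a compact subset of $E$ a Frostman measure $\lambda$ with $\lambda(B(V,\rho))\lesssim\rho^t$ for some $t>m(n-m)-(m-s)$. The crucial input is the refined estimate
\[
\int_{G(n,m)} \phi_r^m(\pi_V x)\, d\lambda(V) \ \lesssim \ \phi_r^s(x) \qquad (|x|>r),
\]
in which the Frostman condition limits how much of $\lambda$ can concentrate near the degenerate directions where $|\pi_V x|$ is small, and the codimension $m-s$ of $t$ in $G(n,m)$ is precisely what drops the admissible exponent from $m$ to $s$. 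Re-running the averaging with a near-optimal measure for the $s$-profile $C_r^s(F)$ and with $\lambda$ in place of $\gamma_{n,m}$ gives $\int C_r^s(F)/C_r^m(\pi_V F)\,d\lambda\lesssim 1$, hence $\ubd\pi_V F\ge\ubd^s F$ for $\lambda$-almost every $V$ after the same limsup upgrade; this contradicts $\lambda$ being supported where $\ubd\pi_V F<\ubd^s F$, forcing the stated bound on $\hd E$, uniformly for $\lbd$, $\ubd$, and (via countable stability) $\pd$.

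I expect two main obstacles. The first is proving the two integral-geometric kernel estimates, and especially the Frostman version, which rests on an incidence/covering bound quantifying how a $t$-dimensional family of subspaces meets the set of directions that nearly annihilate a fixed vector. The second, and in my view the genuine analytic heart of part (i), is the passage from the fixed-scale capacity comparisons to the limsup defining upper box dimension, where the non-commutation of limsup with integration over $G(n,m)$ means a naive Markov bound does not directly yield a full-measure conclusion and the scale-by-scale control must be stitched together carefully.
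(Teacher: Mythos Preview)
The paper does not prove this theorem: it is quoted from \cite{falconerprofile} (note the attribution in the theorem header) and used as a black box in the one-line derivation of Theorems~\ref{boxapp0} and~\ref{boxapp5}. There is therefore no proof in the present paper against which to compare your proposal.

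That said, your sketch is a faithful outline of the argument actually given in \cite{falconerprofile}: the capacity--covering comparison at the borderline exponent, the kernel averaging estimate $\int_{G(n,m)}\phi_r^m(\pi_V x)\,d\gamma_{n,m}(V)\asymp\phi_r^m(x)$, the deterministic upper bound via lifting a measure from $\pi_V F$ to $F$, the Fubini-plus-Markov step for the almost-sure lower bound, and the Frostman replacement of $\gamma_{n,m}$ for the exceptional-set bound are precisely the ingredients used there. The two obstacles you flag are genuine and are handled in that reference; in particular the passage from fixed-scale capacity comparisons to the limsup/liminf defining box dimension is carried out by a subsequence-and-Borel--Cantelli argument along the lines you indicate.
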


The following theorem relates the dimension profiles to (quasi-)Assouad dimension. Combined with Theorem \ref{mainA} this will give the bounds stated in Section 1 for the box and packing dimensions of projections.

\begin{thm}\label{boxapp}
Let    $s\in (0,n]$  and $\theta \in (0,1)$. If $F\subseteq \rn$ is bounded then
\[
\lbd^s F \geq \lbd  F - \max \{0, \ \overline{\dim}_\textup{A}^\theta F-s, \ (\ad F-s)(1-\theta)\}
\]
and
\be\label{ineqprof}
\ubd^s F \geq \ubd  F - \max \{0, \ \overline{\dim}_\textup{A}^\theta F-s, \ (\ad F-s)(1-\theta)\}.
\ee
If $F\subseteq \rn$ is Borel then
\be\label{ineqpd}
\pd^s \! F \geq \pd  \! F - \max \{0, \ \overline{\dim}_\textup{A}^\theta F-s, \ (\ad F-s)(1-\theta)\}.
\ee

\end{thm}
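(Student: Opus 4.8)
All three inequalities reduce to a single capacity estimate, so the plan is to prove the upper box profile bound \eqref{ineqprof} in detail; replacing $\varlimsup$ by $\varliminf$ throughout gives the lower box statement verbatim, and \eqref{ineqpd} then follows by a soft covering argument. Write $\Delta = \max\{0,\,\overline{\dim}_\textup{A}^\theta F-s,\,(\ad F-s)(1-\theta)\}$ for the correction term. To bound $\ubd^s F$ from below I would bound $C_r^s(F)$ from below by exhibiting an inexpensive measure. Fix $\e>0$ and set $\alpha=\overline{\dim}_\textup{A}^\theta F+\e$ and $\beta=\ad F+\e$, so that there is a constant $C$ with $N_r(B(x,R)\cap F)\le C(R/r)^\alpha$ whenever $r\le R^{1/\theta}$ and $N_r(B(x,R)\cap F)\le C(R/r)^\beta$ for all $0<r<R$ and $x\in F$. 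Let $\{x_i\}_{i=1}^N$ be a maximal $r$-separated subset of $F$, so that $N\ge N_{2r}(F)$, and put $\mu=N^{-1}\sum_i\delta_{x_i}$. Since $\phi_r^s(0)=1$ while $\phi_r^s(x_i-x_j)=(r/|x_i-x_j|)^s\le 1$ for $i\ne j$, the energy of $\mu$ is at most $(1+S)/N$, where $S=\max_i S_i$ and $S_i=\sum_{j\ne i}(r/|x_i-x_j|)^s$; hence $C_r^s(F)\ge N/(1+S)$.

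The heart of the matter is to show that $S$ grows no faster than $r^{-c(\e)}$ with $c(\e)$ close to $\Delta$. Grouping the points into dyadic annuli $A_k=\{j:2^kr\le|x_i-x_j|<2^{k+1}r\}$ for $0\le k\le K$, where $2^Kr\asymp\diam F$, gives $|A_k|\le N_r(B(x_i,2^{k+1}r)\cap F)$ and $S_i\le\sum_{k=0}^K|A_k|\,2^{-ks}$. I would split this sum at the crossover scale $R=r^\theta$, that is at $k_0\approx(1-\theta)\log_2(1/r)$, which is exactly the smallest scale at which the Assouad spectrum bound still applies: for $k\ge k_0$ the radius $R=2^{k+1}r$ satisfies $r\le R^{1/\theta}$, so $|A_k|\le C2^{k\alpha}$, whereas for $k<k_0$ only the Assouad bound $|A_k|\le C2^{k\beta}$ is available. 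Each range is a geometric sum. The coarse range contributes $\lesssim\max\{1,r^{-(\alpha-s)}\}$, dominated by its top term at $k=K$ (where $2^K\asymp 1/r$) precisely when $\alpha>s$; the fine range contributes $\lesssim\max\{1,r^{-(1-\theta)(\beta-s)}\}$, dominated by its top term at $k=k_0$ (where $2^{k_0}\asymp r^{-(1-\theta)}$) precisely when $\beta>s$. Uniformly in $i$ this yields $S\lesssim r^{-c(\e)}$, up to logarithmic factors at the borderline exponents, with $c(\e)=\max\{0,\alpha-s,(1-\theta)(\beta-s)\}\to\Delta$ as $\e\searrow 0$.

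Feeding this back, $\dfrac{\log C_r^s(F)}{-\log r}\ge\dfrac{\log N_{2r}(F)}{-\log r}-c(\e)-o(1)$ at every scale $r$, the logarithmic factors being absorbed into the $o(1)$. Since $c(\e)$ is independent of $r$, taking $\varlimsup_{r\to 0}$ gives $\ubd^s F\ge\ubd F-c(\e)$, and letting $\e\searrow 0$ yields \eqref{ineqprof}; the same computation with $\varliminf$ proves the lower box inequality. For the packing inequality \eqref{ineqpd} I would apply \eqref{ineqprof} to each piece $F_j$ of an arbitrary cover of $F$ by countably many compact sets: by monotonicity of the Assouad dimension and spectrum under subsets, $\overline{\dim}_\textup{A}^\theta F_j\le\overline{\dim}_\textup{A}^\theta F$ and $\ad F_j\le\ad F$, so the correction $\Delta_j$ for $F_j$ satisfies $\Delta_j\le\Delta$. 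Hence $\sup_j\ubd^s F_j\ge\sup_j(\ubd F_j-\Delta_j)\ge\sup_j\ubd F_j-\Delta\ge\pd F-\Delta$ for every such cover, and taking the infimum over covers gives \eqref{ineqpd}.

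The step I expect to be the main obstacle is the energy estimate of the second paragraph: locating the crossover scale $r^\theta$ so that the spectrum bound is invoked exactly where it is valid, and verifying that the coarse and fine geometric sums combine to produce precisely the three-term maximum $\Delta$, while carefully tracking the $\e$-losses in $\alpha$ and $\beta$ and the harmless logarithmic factors at the borderline exponents $\alpha=s$ and $\beta=s$.
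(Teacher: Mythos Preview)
Your argument for the box-dimension inequalities is essentially identical to the paper's: the same uniform measure on a maximal $r$-separated set, the same dyadic-shell decomposition of the potential, and the same split at $k_0\approx(1-\theta)\log_2(1/r)$ separating the Assouad-spectrum regime ($k\ge k_0$, exponent $\alpha$) from the full Assouad regime ($k<k_0$, exponent $\beta$), leading to the same three-term maximum.

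For the packing inequality you take a simpler route than the paper. The paper passes, via a Baire category argument, to a compact $E\subseteq F$ with $\pd(E\cap U)>\gamma$ for every open $U$ meeting $E$, and then uses that any countable compact cover of $E$ must contain a piece enclosing some $E\cap\overline{U_i}$. Your direct approach---apply \eqref{ineqprof} to each piece $F_j$ of an arbitrary compact cover, use monotonicity of $\ad$ and $\overline{\dim}_\textup{A}^\theta$ to replace $\Delta_j$ by $\Delta$, note $\sup_j\ubd F_j\ge\pd F$ by definition, and take the infimum over covers---is shorter and avoids Baire entirely. One small point you should make explicit: the pieces $F_j$ in the definition of $\pd^s$ need not be subsets of $F$, so monotonicity does not apply directly; first replace each $F_j$ by $F_j\cap\overline{F}$, which is still compact, still covers $F$, and does not increase $\ubd^s F_j$, and then use $\ad(F_j\cap\overline{F})\le\ad\overline{F}=\ad F$ (and likewise for the spectrum). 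With that adjustment your packing argument is correct and more economical than the paper's.
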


\begin{proof}
We first consider upper box dimensions. We may assume for convenience that $|F|<1/2$, where $|F|$ denotes the diameter of $F$.  Throughout this proof we write $N_r(E)$ to denote the maximal size of an $r$-separated subset of $E$. Fix   $\alpha> \overline{\dim}_\textup{A}^\theta F$, $\beta> \ad F$ and let  $C>0$ be a constant such that for all $0<r<R<1$ and $x \in F$
\[
N_r(B(x,R) \cap F) \leq C \left(\frac{R}{r}\right)^\beta,
\]
and  for all $0<r \leq R^{1/\theta}<R<1$ and $x \in F$
\[
N_r(B(x,R) \cap F) \leq C \left(\frac{R}{r}\right)^\alpha.
\]
Let  $0< r<1$ and  $\{x_i\}_{i=1}^{N_r(F)}$ be a maximal $r$-separated set of points in $F$.  Place a point mass of weight $1/N_r(F)$ at  each $x_i$ and let the measure $\mu$ be the aggregate of these point masses so that $\mu(F) = 1$.

Write $D= \lceil \log_2(2|F|r^{-1})\rceil$ and $B=\lceil (1-\theta)\log_2(r^{-1})\rceil$ noting that for sufficiently small $r$, $1\leq B<D$.      For each $i$ the potential of $\mu$ at $x_i$ is
\begin{eqnarray*}
\int  \phi_r^s(x_i-y)d\mu(y)  &\leq& \sum_{k=0}^D 2^{-(k-1)s} \mu(B(x_i,2^k r)) \\ \\
&\leq& \sum_{k=0}^D 2^{-(k-1)s}\frac{1}{N_r(F)} N_r\big(B(x_i,2^k r)\cap F\big) \\ \\
&\leq&  \frac{2^s}{N_r(F)} \bigg( \sum_{k=B}^D 2^{-ks}C\Big(\frac{2^k r}{r}\Big)^\alpha \ + \ \sum_{k=0}^{B-1} 2^{-ks}C\Big(\frac{2^k r}{r}\Big)^\beta\bigg) \\ \\
&\leq& c\, \frac{\max\{1, \ r^{-(\alpha-s)}, \ r^{-(\beta -s)(1-\theta)}\}}{N_r(F) }
 \end{eqnarray*}
for a constant $c$ which is independent of $r$. Summing over the $x_i$, the energy of $\mu$ is
$$\int \int \phi_r^s(x-y)d\mu(x)d\mu(y) \leq c\frac{\max\{1, \ r^{-(\alpha-s)}, \ r^{-(\beta-s)(1-\theta)}\}}{N_r(F) }$$
and so the capacity $C_r^s(F)$ satisfies
\[
C_r^s(F)\geq c^{-1}N_r(F) \min\{1, \ r^{(\alpha-s)}, \ r^{(\beta-s)(1-\theta)}\} .
\]
Thus
\begin{eqnarray*}
 \overline{\mbox{\rm dim}}_{\rm B}^s F  &=&   \varlimsup_{r\to 0} \frac{\log  C_r^s(F)}{-\log r}\\ \\
& \geq& \varlimsup_{r\to 0} \frac{\log \left(c^{-1}N_r(F) \min\{1, \ r^{(\alpha-s)}, \ r^{(\beta-s)(1-\theta)}\} \right)}{-\log r}\\ \\
& =& \ubd  F - \max \{0, \ \alpha-s, \ (\beta-s)(1-\theta)\}.
\end{eqnarray*}
The conclusion for upper dimensions follows on taking $\alpha$ and $\beta$ arbitrarily close to $\overline{\dim}_\textup{A}^\theta F$ and $\ad F$ respectively.  For the lower box dimension case we take lower limits in the final inequalities.

Finally, we extend the conclusions for upper dimensions to packing dimensions.  Given a Borel set $F$ with $\pd F >\gamma$ there exists a compact $E \subseteq F$ such that $\pd (E\cap U)  >\gamma$ for every open set $U$ that intersects $E$, see for example \cite[Lemma 2.8.1]{bisper}. Let $\{U_i\}$ be a countable basis of open sets that intersect $E$. From \eqref{ineqprof},
\begin{eqnarray*}
\ubd^s (E\cap \overline{U_i})& \geq & \ubd  (E\cap \overline{U_i}) - \max \big\{0, \ \overline{\dim}_\textup{A}^\theta (E\cap \overline{U_i})-s, \ (\ad (E\cap \overline{U_i}) -s)(1-\theta)\big\}\\
& \geq &\gamma - \max \big\{0, \ \overline{\dim}_\textup{A}^\theta F -s, \ (\ad F -s)(1-\theta)\big\}
\end{eqnarray*}
for all $i$, using the monotonicity of $\overline{\dim}_\textup{A}^\theta$ and $\ad$.

Let $\{E_j\}$ be any countable cover of $E$ by compact sets. By Baire's category theorem, for some $j$,  $E\cap E_j$ contains a set that is open relative to $E$, so  $E\cap  \overline{U_i} \subseteq E\cap E_j$ for some $i$.
It follows from the definition of the packing dimension profile $\pd^s$ \eqref{packprof} that
$$\pd^s \! F \ \geq \ \pd^s \! E \ \geq \ \gamma - \max \big\{0, \ \overline{\dim}_\textup{A}^\theta F -s, \ (\ad F -s)(1-\theta)\big\}.$$
Taking $\gamma$ arbitrarily close to $\pd F$ gives \eqref{ineqpd}.
\end{proof}

\noindent {\it Proof of Theorems \ref{boxapp0} and \ref{boxapp5}.}
Theorem \ref{boxapp0} is immediate on substituting the inequalities of  Theorem \ref{boxapp} with $s=m$ in Theorem \ref{mainA}(i). Similarly Theorem \ref{boxapp5} follows using the estimates of Theorem \ref{boxapp} in
Theorem \ref{mainA}(ii).
\hfill $\Box$

\section{Sharpness of the threshold for the (quasi-)Assouad dimension}

To conclude the paper, we show that Corollary \ref{boxapp2} is sharp in the following sense:
\begin{lma} \label{example}
For all $s\in (m,n]$ and $t\in (0,s)$ there exists a compact set $F\subset\rn$ such that $\pd F=\ubd F =t$, $\ad F =s$ (in particular, $\qad F \le s$), and 
\[
\pd \pi_V F  \le \ubd \pi_V F \le \frac{mst}{m(s-t)+st} < \min(t,m)
\] 
for every $V\in G(n,m)$.
\end{lma}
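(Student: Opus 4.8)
The plan is to build $F$ by hand as a Moran-type Cantor set in $[0,1]^n$ whose defining scales decrease so rapidly that box counts, and box counts of projections, can be read off one level at a time. Write $d=\tfrac{mst}{m(s-t)+st}$, equivalently $\tfrac1d=\tfrac1t+\tfrac1m-\tfrac1s$; this last form shows at once that $d<\min\{t,m\}$. I would choose $1=\delta_0>\delta_1>\cdots\to0$ with $\log(1/\delta_{k-1})=o(\log(1/\delta_k))$, set $\beta_k=\delta_{k-1}^{t/s}\delta_k^{1-t/s}\in(\delta_k,\delta_{k-1})$, and define $F=\bigcap_k F_k$, where $F_k$ is a union of balls of radius $\delta_k$ obtained from $F_{k-1}$ as follows: inside each ball $B$ of $F_{k-1}$ \emph{confine} the construction to a concentric ball $B'$ of radius $\beta_k$, and inside $B'$ place about $(\delta_{k-1}/\delta_k)^t$ separated balls of radius $\delta_k$ arranged in a self-similar $s$-regular pattern. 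This is geometrically possible because $(\delta_{k-1}/\delta_k)^t=(\beta_k/\delta_k)^s\le(\beta_k/\delta_k)^n$, where $s\le n$ is used.

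By construction $N_{\delta_k}(F)\asymp\delta_k^{-t}$, and one checks $N_\rho(F)\le C\rho^{-t}$ for every $\rho$, so $\ubd F=t$; since every cylinder of $F$ is a rescaled tail of the same construction and hence also has upper box dimension $t$, the Baire category argument used in the proof of Theorem~\ref{boxapp} gives $\pd F=t$. The $s$-regular patterns realise windows $B(x,\beta_k)\cap F$ with $N_{\delta_k}\asymp(\beta_k/\delta_k)^s$ and $\beta_k/\delta_k=(\delta_{k-1}/\delta_k)^{t/s}\to\infty$, so $\ad F\ge s$; as these patterns are the densest feature present, no window has exponent exceeding $s$, whence $\ad F=s$ (so in particular $\qad F\le s$).

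The heart of the argument is the projection bound for \emph{every} $V\in G(n,m)$, where the only robust tool is the overlap inequality $N_\rho(\pi_V A)\le C(\diam A/\rho)^m$. Fixing $V$ and $\rho$, I would bound $N_\rho(\pi_V F)$ by the minimum of the trivial bound $N_\rho(F)$ and, choosing a confinement scale $\beta_k\ge\rho$, the overlap bound $N_{\beta_k}(F)\cdot C(\beta_k/\rho)^m=C\,\delta_{k-1}^{-t}(\beta_k/\rho)^m$, legitimate because the descendants of each ball of $F_{k-1}$ lie, after projection, in an $m$-dimensional ball of radius $\beta_k$. Taking logarithms, the very fast decay of the $\delta_k$ renders the coarse factors $\log(\delta_{k-1}^{-t})$ negligible against $\log(1/\rho)$, so the exponent of $N_\rho(\pi_V F)$ is governed by the competition between the within-level box count and the overlap term $m\log(\beta_k/\rho)$. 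A short calculation shows this exponent never exceeds $d$: along the scales $\rho=\delta_k$ it equals $tm/s<d$, it rises to exactly $d$ at the transition scale $\rho^\ast=\beta_k^{\,1-s/m}\delta_k^{\,s/m}$, where the projected $s$-regular pattern becomes fully resolved, and it decreases thereafter. Hence $\ubd\pi_V F\le d$ for all $V$, and $\pd\pi_V F\le\ubd\pi_V F$ gives the remaining inequality.

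I expect the substantive difficulty to be precisely this uniform-in-$V$ estimate at \emph{all} scales, not merely the scales $\delta_k$. At $\rho=\delta_k$ the projected image is a deep $s$-dimensional pattern collapsed to an $m$-dimensional one, with harmless exponent $tm/s$; but as $\rho$ decreases this image resolves, and the danger is that resolution pushes the exponent above $d$ (a naive Moran construction omitting the confinement step does exactly this). The twin purposes of the confinement radius $\beta_k$ and of the super-exponential spacing of the $\delta_k$ are to place the resolution scale $\rho^\ast$ exactly where the overlap bound is tight and to suppress interference between levels, so that the single elementary overlap estimate controls $N_\rho(\pi_V F)$ uniformly in $V$ and yields the sharp exponent $d$.
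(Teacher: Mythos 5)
Your proposal is correct and is essentially the paper's own argument: the paper realizes exactly your ``$s$-dimensional bursts separated by long gaps'' set concretely as a product of dyadic digit-restriction sets $X_S^n$ (digits permitted at Banach density $s/n$ only in windows $\{k_j,\dots,\lfloor\tfrac{s}{s-t}k_j\rfloor\}$ with super-exponentially growing $k_j$), rather than as an abstract Moran construction with confinement balls, and it verifies $\pd F=\ubd F=t$, $\ad F=s$ by the same density/Baire-category reasoning you outline. The projection estimate is likewise identical in substance: for every $V$ the paper plays the coarse cover multiplied by the $m$-dimensional overlap factor $2^{m(k-k_j)}$ against the trivial fine cover, and the crossover at $k=\bigl(1+\tfrac{st}{m(s-t)}\bigr)k_j$ yields exactly your exponent $d=\tfrac{mst}{m(s-t)+st}$, uniformly in $V$.
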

This lemma says that, in order to guarantee that packing or upper box dimensions are preserved under almost all orthogonal projections (or indeed under even one orthogonal projection), it is not enough to assume that $\qad F \le s$, or even that $\ad F \le s$, if $s>m$ (while $s=m$ is enough by Corollary \ref{boxapp2}). However, how much the packing or box dimension of a typical projection $\pi_V F$, $V\in G(n,m)$, can drop from $\ubd F $ in terms of $\ubd F $ and $\ad  F $ remains an open problem, since the upper and lower bounds provided by Corollary \ref{boxapp2} and Lemma \ref{example} respectively, are in general quite far apart from each other. We note that when $s=n$, the upper bound given by Lemma \ref{example} agrees with the lower bound in \eqref{boxdimsbounds}, and is therefore sharp in this case. 

The construction of the set $F$ in the above lemma is based on sets defined by restricting the digits in dyadic expansions. Given a set $S\subseteq\mathbb{N}$, let
\[
X_S = \left\{ \sum_{k=1}^\infty a_k 2^{-k}: a_k \in\{0,1\} \text{ and } a_k=0  \text{ for all } k\in \mathbb{N}\setminus S \right\} \subset [0,1].
\]
We write  $\#F$ to denote the cardinality of a set $F$.  Recall the definition of (upper) density and Banach density of a subset  of $\mathbb{N}$:
\begin{align*}
%\underline{d}(S) &= \liminf_{k\to\infty} \frac{\#S\cap\{1,\ldots,k\}}{k},\\
\overline{d}(S) &= \limsup_{k\to\infty} \frac{\#(S\cap\{1,\ldots,k\})}{k},\\
\overline{d}_B(S) &= \limsup_{k\to\infty} \sup_{\ell\in\mathbb{N}} \frac{\#(S\cap\{\ell,\ell+1,\ldots,\ell+k-1\})}{k}.
\end{align*}

\begin{lma} \label{dyadic}
Given $S\subset\mathbb{N}$ and $n\in\mathbb{N}$,
\[
%\hd(X_S^n)=\lbd(X_S^n)=\underline{d}(S)n,\quad 
\pd X_S^n =\ubd X_S^n =\overline{d}(S)n,\quad \ad X_S^n  =\overline{d}_B(S)n.
\]
where $X_S^n \subseteq [0,1]^n$ is the $n$-fold product of $X_S$.
\end{lma}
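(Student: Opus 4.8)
The plan is to reduce all three computations to counting dyadic cylinders. A level-$k$ cylinder of $X_S$ is obtained by fixing the digits $a_1,\dots,a_k$ subject to $a_j=0$ for $j\notin S$, so there are exactly $2^{\#(S\cap\{1,\dots,k\})}$ of them, each of diameter $2^{-k}$, giving $N_{2^{-k}}(X_S)\asymp 2^{\#(S\cap\{1,\dots,k\})}$. Working with the supremum metric on $[0,1]^n$ (bi-Lipschitz to the Euclidean one, hence leaving every dimension unchanged), cylinders of $X_S^n$ are products of cylinders of $X_S$, so $N_{2^{-k}}(X_S^n)\asymp 2^{n\#(S\cap\{1,\dots,k\})}$. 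The structural fact I will use repeatedly is that the level-$\ell$ cylinder through a point is the affine copy $c+2^{-\ell}X_{S^{(\ell)}}$, where $S^{(\ell)}=\{j\in\mathbb{N}:\ell+j\in S\}$ is the shift of $S$.

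Given this, the upper box dimension is immediate: inserting $N_{2^{-k}}(X_S^n)\asymp 2^{n\#(S\cap\{1,\dots,k\})}$ into the definition (interpolation between consecutive dyadic scales changes things only by bounded factors) yields $\ubd X_S^n=\varlimsup_k \tfrac{n\#(S\cap\{1,\dots,k\})}{k}=n\overline{d}(S)$. For the Assouad dimension I localise the same count: for $R=2^{-\ell}$ and $r=2^{-(\ell+k)}$, the set $B(x,R)\cap X_S$ is comparable to a single level-$\ell$ cylinder, whence $N_r(B(x,R)\cap X_S)\asymp 2^{\#(S\cap\{\ell+1,\dots,\ell+k\})}$, the $n$-fold product contributing a factor $n$ in the exponent. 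Writing $M_k=\sup_\ell \#(S\cap\{\ell+1,\dots,\ell+k\})$, the Assouad condition $N_r(B(x,R)\cap X_S^n)\le C(R/r)^\alpha$ is equivalent, up to the additive constant $\log_2 C$, to $nM_k\le \alpha k+O(1)$ for all $k$; taking the infimum over admissible $\alpha$ (the finitely many small scales being absorbed into $C$) gives $\ad X_S^n=\varlimsup_k \tfrac{nM_k}{k}=n\overline{d}_B(S)$.

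The delicate point is the packing dimension, where $\pd X_S^n\le \ubd X_S^n=n\overline{d}(S)$ is general but the reverse inequality needs work. The key lemma is that shifting does not change the upper density: since $\#(S\cap\{\ell+1,\dots,\ell+k\})=\#(S\cap\{1,\dots,\ell+k\})-\#(S\cap\{1,\dots,\ell\})$ with the subtracted term constant in $k$, a short $\varlimsup$ computation gives $\overline{d}(S^{(\ell)})=\overline{d}(S)$ for every $\ell$. Consequently every cylinder of $X_S^n$, being an affine copy of $X_{S^{(\ell)}}^n$, has upper box dimension exactly $n\overline{d}(S)$, so $\ubd\big(X_S^n\cap U\big)\ge n\overline{d}(S)$ for every open $U$ meeting $X_S^n$, as such a $U$ contains a whole cylinder. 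I then invoke the modified-box-dimension characterisation of packing dimension exactly in the style of the proof of Theorem \ref{boxapp}: given any countable cover of the compact set $X_S^n$ by compact sets $\{F_j\}$, Baire's category theorem forces some $\overline{F_j}$ to contain a set relatively open in $X_S^n$, hence a cylinder, so $\ubd F_j\ge n\overline{d}(S)$, and therefore $\sup_j\ubd F_j\ge n\overline{d}(S)$, giving $\pd X_S^n\ge n\overline{d}(S)$.

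In summary, the box and Assouad identities are routine cylinder counts, and I expect the main obstacle to be the packing lower bound, which rests on the shift-invariance $\overline{d}(S^{(\ell)})=\overline{d}(S)$ of the upper density together with the Baire category argument ensuring that no countable decomposition can lower the upper box dimension of a typical piece.
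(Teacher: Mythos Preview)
Your proof is correct and follows essentially the same approach as the paper. The box and Assouad computations are identical in spirit to the paper's cylinder counts, and for packing dimension the paper simply cites \cite[Lemma 2.8.1]{bisper}, whereas you unpack that citation explicitly: your shift-invariance observation $\overline{d}(S^{(\ell)})=\overline{d}(S)$, showing that every relatively open subset of $X_S^n$ has full upper box dimension, followed by the Baire category argument, is exactly the content of that lemma applied here.
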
 
\begin{proof}
The claim for upper box dimension is almost immediate from the definition, see \cite[Example 1.4.2]{bisper} for details in the case $n=1$. The claim for packing dimension follows from the one on upper box dimension and \cite[Lemma 2.8.1]{bisper}. Finally, for the Assouad dimension formula, we note that if
\[
2^{-\ell-k}\le r < 2^{1-\ell-k} \le 2^{-\ell-1} < R \le 2^{-\ell}
\] 
then, for any $x\in X_S$, the set $X_S\cap B(x,R)$ can be covered by $C_n\cdot 2^{-n \#(S\cap \{\ell,\ldots,\ell+k-1\})}$ balls of radius $r$ and cannot be covered by fewer than a constant (depending on $n$) multiple of this number, so that $\ad(X_S)= \overline{d}_B(S)n$.
\end{proof}

\begin{proof}[Proof of Lemma \ref{example}]
Let $A\subseteq\mathbb{N}$ be a set with $\overline{d}(A)=\overline{d}_B(A)=s/n$; this is easily arranged. Let $(k_j)_{j\in\mathbb{N}}$ be a sequence of natural numbers satisfying
\be \label{rapidly-increasing}
(k_1 \cdots k_{j-1})/k_j\to 0 \quad\text{as}\quad j\to\infty.
\ee
Finally, set
\[
S = \bigcup_{j=1}^\infty (A+k_j) \cap \left\{ k_j,\ldots, \left\lfloor\frac{s}{s-t} k_j\right\rfloor \right\},
\]
and $F=X_S^n$. Here $A+k_j=\{ a+k_j:a\in A\}$. Using \eqref{rapidly-increasing},  we see that $\overline{d}(S)$ is realized along the sequence $\lfloor\tfrac{s k_j}{s-t}\rfloor$, and a calculation shows that $\overline{d}(S)=t/n$. Also, $\overline{d}_B(S)=\overline{d}_B(A)=s/n$ and hence, thanks to Lemma \ref{dyadic},
\[
\pd F =\ubd F =t, \quad \ad F =s.
\]
Now fix $V\in G(n,m)$, $\e>0$ and $k\in\mathbb{N}$. Pick $j$ such that $k_j \le k < k_{j+1}$. Provided $k$ (and therefore $j$) is large enough in terms of $\e, n, s$ and $t$, the set $F$ can be covered by
\[
2^{(n\prod_{i=1}^{j-1} \frac{t k_i}{s-t})}  \le 2^{\e k_j}
\]
cubes of side-length $2^{-k_j}$, where we used \eqref{rapidly-increasing}. Hence, $\pi_V F $ can be covered by $C_{n,m} 2^{m(k-k_j)} 2^{\e k_j}$ cubes of side-length $2^{-k}$. On the other hand, if $k > s k_j/(s-t)$, then (again assuming $k$ is large enough), $F$ can be covered by $2^{(t+\e) s k_j/(s-t)}$ cubes of side-length $2^{-k}$, and hence $\pi_V F $ can be covered by at most a constant $C_{n,m}$ multiple of that number. Up to the terms involving $\e$, the first bound is more efficient when
\[
k < \big(1+\tfrac{st}{m(s-t)}\big)k_j,
\]
otherwise the second bound is more efficient. Note that $1+\tfrac{st}{m(s-t)}> \tfrac{s}{s-t}$, since $s>m$. A short calculation shows that, in any case, $\pi_V F $ can be covered by
\[
C_{n,m} 2^{ (d+ \e C_{m,s,t})k  }, \quad d=\frac{mst}{m(s-t)+st},
\] 
cubes of side-length $2^{-k}$. Since $\e>0$ was arbitrary, this concludes the proof.
\end{proof}

\subsection*{Acknowledgements}

KJF and JMF were   supported by an \emph{EPSRC Standard Grant} (EP/R015104/1). JMF was supported by a \emph{Leverhulme Trust Research Project Grant} (RPG-2019-034). KJF and PS were supported by a Royal Society International Exchange grant IES\textbackslash R1\textbackslash 191195 and PS by Project PICT 2015-3675 (ANPCyT).


\begin{thebibliography}{99}

\bibitem{bisper}
C. J. Bishop and Y. Peres.
{\em Fractals in Probability and Analysis},
Cambridge University Press, 2017.

\bibitem{falconer}
K. J. Falconer.
{\em Fractal Geometry - Mathematical Foundations and Applications},
 John Wiley \& Sons, Hoboken, NJ, 3rd. ed., 2014.


\bibitem{falconerprofile}
K. J. Falconer.
A capacity approach to box and packing dimensions of projections of sets and exceptional directions,
 {\it J. Fractal Geometry, to appear}.

\bibitem{survey1}
K. J. Falconer, J. M. Fraser and X. Jin.
Sixty Years of Fractal Projections,
\emph{Fractal geometry and stochastics V, (Eds. C. Bandt, K.~J. Falconer  and M. Z\"ahle)}, Birkh\"auser, Progress in Probability, 2015.

\bibitem{falconerhowroyd}
K. J. Falconer and J. D. Howroyd.
Projection theorems for box and packing dimensions,
\emph{Math. Proc. Cambridge Philos. Soc.}, {\bf119}, (1996), 287--295.

\bibitem{falconerhowroyd2}
K. J. Falconer and J. D. Howroyd.
Packing dimensions of projections and dimension profiles,
\emph{Math. Proc. Cambridge Philos. Soc.}, {\bf 121}, (1997), 269--286.


\bibitem{canadian}
J.~M. Fraser, K. E. Hare, K. G. Hare, S. Troscheit and H. Yu.
 The Assouad spectrum and the quasi-Assouad dimension: a tale of two spectra,
\emph{Ann. Acad. Sci. Fenn. Math.}, {\bf 44}, (2019), 379--387.

\bibitem{Spectraa}
J. M. Fraser and H. Yu.
New dimension spectra: finer information on scaling and homogeneity,
\emph{Adv. Math.}, {\bf 329}, (2018), 273--328.

\bibitem{Spectrab}
J. M. Fraser and H. Yu.
Assouad type spectra for some fractal families,
\emph{Indiana Univ.~Math.~J.},  {\bf 67}, (2018), 2005--2043.


\bibitem{profiles}
J. D. Howroyd. Box and packing dimensions of projections and dimension profiles,
\emph{Math. Proc. Cambridge Philos. Soc.}, {\bf 130}, (2001), 135--160.

%\bibitem{HK}
%J. D. Howroyd. How projections affect the dimension spectrum of fractal measures,
%\emph{Nonlinearity}, {\bf 10}, (1997), 1031--1046.


\bibitem{jarvenpaa}
M. J\"arvenp\"a\"a. On the upper Minkowski dimension, the packing dimension, and orthogonal projections, \emph{Ann. Acad. Sci. Fenn. A Dissertat.}, {\bf 99}, (1994).


\bibitem{mar}
J. M. Marstrand.
 Some fundamental geometrical properties of plane sets of fractional
  dimensions, {\em Proc. London Math. Soc.(3)} {\bf 4} (1954), 257--302.

\bibitem{mat}
P. Mattila.
 Hausdorff dimension, orthogonal projections and intersections with
  planes,
 {\em Ann. Acad. Sci. Fenn. A Math.} {\bf 1} (1975),  227--244.



\bibitem{survey2}
P. Mattila.
Recent progress on dimensions of projections,
in {\em Geometry and Analysis of Fractals}, D.-J. Feng and K.-S. Lau (eds.), pp 283--301,
{\it  Springer Proceedings in Mathematics \& Statistics.} {\bf 88}, Springer-Verlag, Berlin Heidelberg, 2014.

\bibitem{orp}
T. Orponen.
On the packing dimension and category of exceptional sets of orthogonal projections,
{\em Ann. Mat. Pura Appl.} {\bf 194} (2015),  843--880.


\end{thebibliography}
\end{document}